\date{}
\newtheorem{thm}{Theorem}[section]
\newtheorem{rmk}[thm]{Remark}
\newtheorem{prop}[thm]{Proposition}
\title{ A simple toy model for the collapse of the local group with the Shapley attractor.}
\author{Alain Haraux\vspace{1ex}\\ 
{\normalsize Sorbonne Universit\'e, Universit\'e Paris-Diderot SPC, CNRS, INRIA}, \\
{\normalsize Laboratoire Jacques-Louis Lions,  LJLL, F-75005,
Paris, France.}\\ 
{\normalsize e-mail: \texttt{alain.haraux@sorbonne-universite.fr}}}
\begin{document}
\maketitle
\begin{abstract}
A toy model is proposed for the Cosmic Dipole  consisting in the Shapley attractor and the so-called Dipole repeller, whose action is assimilated to an anti-gravitational force. According to  this model, the local group will collapse in finite time with the Shapley attractor and, by using the available figures for distances and masses, it is shown that it will happen in less than 100 billion years. To obtain a more precise estimate, more knowledge will be necessary on the equivalent negative mass of the repeller. \\

\vspace{1cm} 

\noindent{\textbf Key words:} energy, mass, gravitation, differential equations, finite time collapse. \end{abstract}

%%%%%%%%%%%%%%%%%%%%%
%                   %
%   Inizio lavoro   %
%                   %
%%%%%%%%%%%%%%%%%%%%%
 
\section{Introduction}
Since the discovery in 2017 of the great repeller \cite{Hoff}, it is currently admitted that our local group of galaxies is trapped between the Shapley attractor discovered in 2006 \cite{Proust} and a relatively empty zone playing the role of a repeller. Concerning the repeller, there is a controversy between 2 interpretations, the dominant idea being that the relative void repels because its density is lower than the average density in the region, and some other specialists invoking a negative repelling mass to explain the presence of the local vacuum. In both cases, in the equation, the repeller can be modelized by a negative mass, virtual in the first case and real in the second. To decide whether  the negative mass is real or virtual, we would need to know whether the repulsive force is stronger or not than the one created by an absolute vacuum in the volume of the repeller, and unfortunately this may be difficult, requiring to observe very precisely the motion of objects close to the repeller. \\

In this short note, we introduce a very simplified toy model to understand mathematically the convergence to the attractor. For this we reduce both attractor and repeller to points and we consider them as fixed in space, since their mass is considerable compared to that of galaxies. So the convergence is towards the attractor and not some kind of barycenter, which is always the case in reality for 2 body problems. We also neglect the motion of the repeller with respect to the attractor, while the exact nature of the repeller is unknown. If the repeller appears in the future to have a real strongly negative mass comparable in absolute value to that of the Shapley attractor, our conclusions will have to be changed since in that case the repeller and the attractor will repell each other and this may change the global behavior. \\

The plan of the paper is as follows: Sections 2 and 3 are devoted to explicit calculations without repeller. In Sections 4 , we show that the presence of the repeller produces an earlier collapse. Section 5 and 6 are devoted to the study of the repeller. In Section 7, we briefly discuss the significance of the anti-gravitational action of the repeller. Section 8 is an appendix collecting the numerical data used in the rest of the paper. 

\section{Some calculations without the repeller} 
In this section, we evaluate the collapsing time for a 2-body problem, assuming that  the largest body is so massive that we can consider it fixed. Denoting by $M$ its mass, and assuming that the motion takes place on a line passing through it, with initial velocity pointing towards the attractor put at the origin,  the equation of variation of the distance $ w(t)$ of the light body to the attractor is scalar and of the form 
$$ w''= - \frac{GM}{w^2}, \quad w(0) = W>0, \,\,w'(0) <0.  $$ The energy $$ E := \frac{1}{2}w'^2 - \frac{GM}{w} = \frac{1}{2}w'^2(0) - \frac{GM}{W} $$ is conserved, which implies that $w$ satisfies the first order ODE: 
$$ w'= - \sqrt{ \frac{K}{w} + C }$$ where $$ K= 2GM, \quad C = w'^2(0) - \frac{2GM}{W}. $$ 

The  behavior of w(t) will of course depend on the sign of the constant $C$, although near the collapse the term $\displaystyle \frac{K}{w}$ is clearly predominant. To fix the ideas, let us see how the values of the two terms compare in the case of the Shapley attractor and the local group. A rough estimate corresponding to the data found on the web gives (see appendix) $$ Ww'^2(0)\sim 2. 10^{36} m^3/ s^2; \quad K = 2GM\sim  2.10^{37} m^3/ s^2 $$ so that clearly $C<0$. We shall do the next calculations under this hypothesis, so that the equation becomes 
\begin{equation} \label{1} w'= - \sqrt{ \frac{K}{w} -\beta }\end{equation} with 
\begin{equation} \label{neg} \beta = -C = \frac{K}{W}- w'^2(0)>0 \end{equation} It is clear that \eqref{1} has a unique local solution with initial condition $w(0) = W$ on a small positive interval of time $J = (0, \tau).$ Since $w$ is decreasing , we can see that the solution can be continued until  $w(t)$ vanishes. And because $w'(t) < w'(0)$ on the existence interval, we find that the solution vanishes in a finite time $T$ with $$ 0 = w(T) = W + \int_0^T w'(s) ds \le W+ T w'(0)  $$  hence $$T
 \le \frac{W}{|w'(0)|}.$$ In the example we have $$ \frac{W}{|w'(0)|}\sim 10^{19} s \sim 3. 10^{11} \rm{years}. $$ As we shall see, an exact calculation will allow to reduce that number, which is natural since the velocity increases in absolute value and  even becomes infinite at the collapsing time. 
  
 \section{Exact calculation of the collapse time without repeller} Equation \eqref{1} gives easily the exact value of $T$. Indeed, it can be written in the form 
 \begin{equation*}  \frac{dt}{dw } = - \frac{1} {\sqrt{ \frac{K}{w} -\beta }} = - \frac{\sqrt{w}} {\sqrt{ K -\beta w}} \end{equation*} yieding 
 \begin{equation} T = \int_0^W  \frac{w^{1/2} dw} {\sqrt{ K -\beta w}}\end{equation} 
 We can now state 
  \begin{prop} \label{Prop1}Under condition \eqref{neg}, we have \begin{equation} \label {T} T = \frac{1}{K^{1/2} \gamma^{3/2} } \left [ \arg \sin \sqrt {\gamma W} - \sqrt {\gamma W} \sqrt {(1-\gamma W)}\right] \end{equation} with $$ \gamma = \frac{\beta}{K}. $$ \end{prop}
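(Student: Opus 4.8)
The plan is to compute the integral $T=\int_0^W w^{1/2}(K-\beta w)^{-1/2}\,dw$ obtained just above the statement by a trigonometric substitution that reduces the integrand to a constant multiple of $\sin^2\theta$, then to use the elementary primitive of $\sin^2$.

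First I would check that the substitution to come is legitimate on all of $[0,W]$. Since $\gamma W=\beta W/K=1-W\,w'^2(0)/K$, hypothesis \eqref{neg} gives $0<\gamma W<1$, so $K-\beta w>0$ for every $w\in[0,W]$; in particular the integrand $w^{1/2}(K-\beta w)^{-1/2}$ is continuous on $[0,W]$ and $T$ is a proper integral. (One should also recall from Section 2 that $w$ is strictly decreasing, stays in $(0,W)$, and reaches $0$ in finite time; this is precisely what makes the change of variable $t\leftrightarrow w$, and hence the identity $T=\int_0^W w^{1/2}(K-\beta w)^{-1/2}\,dw$, valid.)

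Next I would set $\gamma w=\sin^2\theta$, with $\theta$ ranging over $[0,\theta_W]$ where $\theta_W:=\arg\sin\sqrt{\gamma W}\in[0,\pi/2)$. Then $w=\gamma^{-1}\sin^2\theta$, $dw=2\gamma^{-1}\sin\theta\cos\theta\,d\theta$, $w^{1/2}=\gamma^{-1/2}\sin\theta$, and $K-\beta w=K(1-\gamma w)=K\cos^2\theta$, so $(K-\beta w)^{1/2}=K^{1/2}\cos\theta$ with the nonnegative branch (legitimate because $\cos\theta\ge 0$ on $[0,\theta_W]$, using $\theta_W<\pi/2$). The $\cos\theta$ factors then cancel and one is left with
\begin{equation*}
T=\frac{2}{K^{1/2}\gamma^{3/2}}\int_0^{\theta_W}\sin^2\theta\,d\theta .
\end{equation*}
Finally, $\int\sin^2\theta\,d\theta=\tfrac12(\theta-\sin\theta\cos\theta)$, and evaluating between $0$ and $\theta_W$ with $\sin\theta_W=\sqrt{\gamma W}$ and $\cos\theta_W=\sqrt{1-\gamma W}$ produces exactly \eqref{T}.

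I do not expect a genuine obstacle here: the computation is routine. The only points deserving a word of care are the two branch choices for the square roots, both settled by the inequality $0<\gamma W<1$, and — if the statement is also meant to assert that this $T$ is the physical collapse time — the continuation and monotonicity facts already established in Section 2. As an independent cross-check one could instead differentiate the right-hand side of \eqref{T} with respect to $W$ and verify that the derivative equals $W^{1/2}(K-\beta W)^{-1/2}$, together with the obvious limit $T\to 0$ as $W\to 0$.
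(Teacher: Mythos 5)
Your proof is correct and follows essentially the same route as the paper: the paper substitutes $w=s^2/\gamma$ and then $s=\sin\phi$, which composed is exactly your single substitution $\gamma w=\sin^2\theta$, reducing $T$ to $\frac{2}{K^{1/2}\gamma^{3/2}}\int_0^{\theta_W}\sin^2\theta\,d\theta$. Your added remarks on the branch choices and on $0<\gamma W<1$ are sound but do not change the argument.
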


  \begin{proof} To compute this integral, we set $$ s = \gamma w^{1/2} \Longleftrightarrow w = \frac{s^2}{\gamma} $$ yielding the formula
  \begin{equation*}  T = \frac{2}{K^{1/2} \gamma^{3/2} } \int_0^{({\gamma W})^{1/2}}   \frac{s^2 ds} {\sqrt{ 1 -s^2}} \end{equation*} The calculation is classical by the change of variable 
  $ s = \sin \phi $ and we obtain \eqref{T}.\end{proof} 
  
  \begin{rmk} Note that this formula makes sense since $$ \arg \sin \sqrt {\gamma W} >\sqrt {\gamma W} ; \quad \gamma W= \frac{\beta W}{K} = 1- \frac{Ww'^2(0)}{K}  \in (0, 1). $$ \end{rmk}
   \noindent Actually $\gamma W \sim 1$, so we get the first approximation 
  \begin{equation} T \sim \frac{\pi}{2} \frac {W^{3/2}}{\sqrt{2GM} }  \end{equation} valid under the assumption  $ Ww'^2(0) <<1$. This formula makes sense from the point of view of dimensional analysis, since $[G] = M^{-1} L^3T^{-2}$. We can refine it a little bit since in our precise numerical situation, we have $\gamma W\sim 0.9$, hence $\frac{\pi}{2}$ can be replaced by $$ \arg \sin \sqrt {\gamma W} - \sqrt {\gamma W} \sqrt {(1-\gamma W)}\sim \arg \sin 0.95 - 0.95 \sqrt {0.1}\sim 1,253- 0,3\sim 0, 95 $$ We end up with 
  \begin{equation} \label {Tatt} T \sim 3.076. 10^{18} s \sim 10^{11} \rm{years}  \end{equation} So with the attractor alone, the collapse would take place in 100 billion years. The exact calculation improved the estimate by a factor 3 with respect to the previous section.

\section{The equation including the repeller}  We can imagine that if the effect of the repeller is roughly equivalent to that of the attractor, the collapsing time will be significantly reduced. However it would not be divided by 2, since when approaching the attractor, the influence of the repeller will be roughly divided by 4 as a consequence of  the inverse square law. From the theoretical point of view, it is nevertheless interesting to see how the equations look like. Modeling the attractor by a positive mass $M$ located at $0$ and the repeller by a (virtual or real) negative mass denoted by  $ -M_*, M_*\ge 0$ located on the positive axis with abscissa $L>0$, the equation now becomes 

$$ u''= - \frac{GM}{u^2}- \frac{GM_*}{(L-u)^2}, \quad u(0) = U\in (0,L), \,\,u'(0) <0. $$ We set $ 2GM = a; \quad 2GM_* = b $ so that the equation becomes 

$$ u''= - \frac{a}{2u^2}- \frac{b}{2(L-u)^2} $$ 

The energy 
$$ F := \frac{1}{2}\left[u'^2 - \frac{a}{u} + \frac{b}{L-u}\right]= \frac{1}{2}\left[u'^2(0) - \frac{a}{U} + \frac{b}{L- U}\right] $$ is conserved, which implies that $u$ now satisfies the first order ODE: 
\begin{equation} \label{rep} u'= - \sqrt{ \frac{a}{u} - \frac{b}{L-u}+ C }\end{equation} where $$  C = u'^2(0) - \frac{a}{U} + \frac{b}{L- U}. $$ 

As for equation \eqref{1}, it is clear that \eqref{rep} has a unique local solution with initial condition $u(0) = U$ on a small positive interval of time $J = (0, \tau).$ Since $u$ is decreasing as long as it exists, we can see that the solution can be continued until  $u(t)$ vanishes. And because $u'(t) < u'(0)$ on the existence interval, we find, as in the case of  \eqref{1}, that the solution vanishes in a finite time $T$ with  $$T\le \frac{U}{|u'(0)|}.$$  In addition we have the following result confirming the expected property that the addition of a repeller reduces the collapsing time: 
\begin{prop} \label{comp} Assuming $$ u'^2(0) - \frac{a}{U}<0, $$  the collapsing time for \eqref{rep} is less than the value given by Proposition \ref{Prop1} with $$ W= U; \quad  \beta = \frac{a}{U}- u'^2(0). $$  \end{prop}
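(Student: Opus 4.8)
The plan is to write both quantities as integrals over the position variable and to compare the integrands pointwise. Repeating for \eqref{rep} the computation of Section 3, put $g(w) := \dfrac{a}{w} - \dfrac{b}{L-w} + C$ with $C = u'^2(0) - \dfrac{a}{U} + \dfrac{b}{L-U}$; the relation $u' = -\sqrt{g(u)}$ becomes $\dfrac{dt}{dw} = -\dfrac{1}{\sqrt{g(w)}}$, and since the solution of \eqref{rep} has already been shown to be strictly decreasing and to vanish at a finite collapse time $T_{*}$, integration from $w=U$ to $w=0$ gives
\begin{equation*} T_{*} = \int_0^U \frac{dw}{\sqrt{\frac{a}{w} - \frac{b}{L-w} + C}} . \end{equation*}
One must of course check that the radicand stays real and positive on $(0,U)$ so that this improper integral is meaningful; this will come out of the comparison below.

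Since the data $W=U$, $K=a$, $\beta = \dfrac{a}{U} - u'^2(0)$ satisfy condition \eqref{neg} — this is precisely the hypothesis $u'^2(0) - \dfrac{a}{U} < 0$ — Proposition \ref{Prop1} applies, and by formula~(3) of Section 3 the value $T$ it produces is
\begin{equation*} T = \int_0^U \frac{dw}{\sqrt{\frac{a}{w} - \beta}} = \int_0^U \frac{dw}{\sqrt{\frac{a}{w} + u'^2(0) - \frac{a}{U}}} . \end{equation*}
Subtracting the two radicands and inserting the value of $C$, one finds that the radicand of $T_{*}$ exceeds that of $T$ by exactly $\dfrac{b}{L-U} - \dfrac{b}{L-w}$. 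As $b = 2GM_{*} \ge 0$ and $0 < w < U < L$, we have $0 < L-U < L-w$, hence $\dfrac{b}{L-w} \le \dfrac{b}{L-U}$ and this difference is $\ge 0$, strictly positive for every $w \in (0,U)$ when $M_{*}>0$. Consequently the radicand of $T_{*}$ dominates $\dfrac{a}{w} - \beta = a\bigl(\tfrac1w - \tfrac1U\bigr) + u'^2(0)$, which is $>0$ on $(0,U)$ — the required positivity — so the integrand of $T_{*}$ is pointwise $\le$ that of $T$ there. Integrating over $(0,U)$ yields $T_{*} \le T$, with strict inequality whenever $M_{*}>0$, which is the assertion (for $M_{*}=0$ equation \eqref{rep} coincides with \eqref{1} for these data, so equality holds).

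The only step needing genuine care is the justification of the integral formula for $T_{*}$: one has to know beforehand that the solution exists, stays monotone, collapses in finite time, and keeps its radicand positive up to the collapse. All of this was already recorded just before the statement — via $|u'(t)| \ge |u'(0)|$, itself a consequence of $g(u(t)) \ge u'^2(0)$ on the relevant range — so no new obstacle appears, and the proof reduces to the elementary monotonicity inequality $\dfrac{1}{L-w} \le \dfrac{1}{L-U}$ for $0 < w \le U < L$. Intuitively: at equal distance from the attractor, the particle that also feels the repeller moves strictly faster, hence crosses the segment $(0,U)$ in strictly less time.
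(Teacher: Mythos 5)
Your argument is correct, but it is not the route the paper takes for this proposition. The paper's own proof is a one-line trajectory comparison: under the hypothesis, the radicand of \eqref{rep} dominates that of \eqref{1} (with $K=a$, $\beta=\frac{a}{U}-u'^2(0)$) for $u\le U$, so $u$ is a strict sub-solution of \eqref{1}, hence $u(t)\le w(t)$ for all $t$, and $u$ therefore hits zero no later than $w$. You instead represent both collapse times as integrals in the position variable and compare the integrands pointwise, the whole matter reducing to $\frac{b}{L-w}\le\frac{b}{L-U}$ for $0<w\le U<L$; this is essentially the alternative derivation the paper itself records in the remark following the proposition, based on formula \eqref{Tcomp} (you use the unrationalized form $\int_0^U dw/\sqrt{g(w)}$, which is cleaner than the paper's version and avoids the slightly garbled algebraic identity printed there). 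The trade-off: the sub-solution argument is immediate but implicitly invokes a comparison principle for a first-order ODE whose right-hand side is only locally Lipschitz away from the collapse point, whereas your integral route needs the a priori facts (strict monotonicity, finite-time vanishing, positivity of the radicand along the range of $u$) to justify the change of variables --- and you correctly note that positivity of $g$ on $(0,U]$ follows from the very inequality being used, with no circularity since that bound is independent of the trajectory. Your handling of the borderline case $M_*=0$ (equality) and of strictness when $M_*>0$ is a small precision the paper does not spell out. No gaps.
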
 
\begin{proof} This comes from the fact that along the trajectory we have constantly $u(t) \le w(t)$ because $u(0)= w(0)$ and $u$ is a strict sub-solution of \eqref{1}. \end{proof} 
\begin{rmk} We have an integral formula for the collapsing time which can easily be computed as 
\begin{equation} \label {Tcomp} T = \int _0^U \frac{\sqrt{u(L-u)} du}{ \sqrt{Cu(L-u)-(a+b)u + aL}} \end{equation} It is not difficult, using this formula, to recover the result of Proposition \ref{comp} since 
$$Cu(L-u) -bu > C u (L-U) -bu = -\beta u (L-u)  $$  for all $u\in (0, L).$ Then we find $$ T <  \int _0^U \frac{\sqrt{u(L-u)} du}{\sqrt {-au + aL -\beta u (L-u) }} =  \int _0^U \frac{\sqrt{u} du}{\sqrt {a -\beta u }}. $$ \end{rmk} 
\begin{rmk} We do not know if the integral \eqref{Tcomp} is computable in terms of elementary functions. A  numerical integration with the numerical data of the appendix and the choice $M^*= M$ gives the value  \begin{equation} \label{tot} T\sim 2,11. 10^{18} s \sim 6,7. 10^{10} years \end{equation}  about 2 thirds of the value without repeller. \end{rmk} 
\begin{rmk} Due to the very large value of T, we may expect that a lot of things happen before the collapse. Some galaxies will collide, for instance the Milky way and the Andromeda galaxy M31, in a much shorter time, at least one order of magnitude smaller. The structure of the Shapley attractor itself will evolve since some other galaxies, closer to the attractor, will collapse much before the local group. And finally it is possible that a ``big crunch" takes place much sooner, or alternatively that the expansion of space increases the size of the Dipole in such a way that the collapse never happens. \end{rmk} 

\section{Exact calculation of the collapse time with the repeller alone} It is interesting to evaluate the collapse time when $a = 0$ (repeller alone). In this case \eqref {Tcomp} reduces to \begin{equation} \label {Trep} T = \int _0^U \frac{\sqrt{(L-u)} du}{ \sqrt{C(L-u)-b}} \end{equation} with 
$$  C = u'^2(0)  + \frac{b}{L- U}>0. $$  By making the change of variables $x= \sqrt{\frac{C}{b}}\sqrt{L-u} $, we find $$  T = \frac{2b}{C\sqrt{C}}\int _{x_1}^{x_2} \frac{x^2 dx}{ \sqrt{x^2-1}} $$ and then setting $x= ch\psi$, we end up with the formula 
$$ T = \frac{b}{C\sqrt{C}} \left[ x \sqrt{x^2-1} + \log (x+\sqrt{x^2-1)}\right] _{\sqrt{\frac{C}{b}}\sqrt{L-U}} ^ {\sqrt{\frac{C}{b}}\sqrt{L}} $$ 
Assuming $b = 2GM^* \sim GM = 2.10^{37} $, and using the values of the appendix, an approximate value for $T$ is $4.9. 10^{18} s$, about 3/2 times larger than the collapse time when the atttractor is acting alone, cf. \eqref{Tatt}. This suggests that the action of the repeller to reduce the collapse time {\it starting from now} is smaller than that of the attractor, which is consistent with the numerical result \eqref{tot}. But the most important contribution of the repeller has been for $t<0$, when we were closer to the repeller than to the attractor.

\section{Looking for the past action of the repeller} Throughout the text, until now the origin of time was the ``present", more precisely the time when the velocity of the local group with respect to the attractor was  measured. It is natural to ask what happened before to produce a velocity vector pointing towards the attractor. With the equation at hand, this means taking the backward equation and see what happens for negative times. It turns out, as expected, that the solution could not start from the repeller, since in terms of the backward equation \begin{equation} \label{rep} v'=  \sqrt{ \frac{a}{v} - \frac{b}{L-v}+ C }\end{equation} satisfied by $v(t) = u(-t)$ this would imply crossing the equilibrium point, solution of the equation $$ \frac{a}{v} - \frac{b}{L-v}+ C = 0 $$ Since the function $$ \Phi(v) = \frac{a}{v} - \frac{b}{L-v}$$ is decreasing on $(0, L) $ and $$\lim _{v\to 0} \Phi(v) = +\infty;\quad \lim _{v\to L} \Phi(v) = -\infty $$ the equilibrium exists and is unique in $(0, L) $. It can easily be computed by selecting the relevant solution of the equation
$$ a(L-v) -bv + Cv(L-v)= 0 . $$
We skip the details. It is not very interesting to pursue this further since the actual trajectory is  not really supported by the straight line joining the repeller and the attractor, even if it tends to do so in large time by the joint action of both masses. The above calculation suggests that for any initial state (position + velocity), there is a kind of forbidden zone around the repeller, depending only on the total energy and therefore accessible to measurement, that the past trajectory never crossed. 

\section{A problem concerning the repeller} For the real system called ``the Dipole" in the literature,  there is a big mystery about the repeller. If the negative mass is virtual and just corresponds to a density defect compared to the average density of the universe, the equivalent mass will never be comparable to that of the Shapley attractor. Indeed, the average density is about $10^{-28} kg/m^3$. To compete with  the mass of Shapley, we need a volume of $10^{(47+28)} m^3 = 10^{75} m^3 $, corresponding to a diameter greater than $10^{25}$meters, hence $10^9 ly,$ clearly incompatible with the figures.  Negative masses, whichever it may mean, have been considered in so-called bimetric models, cf.\cite{Petit}. Such models question basically our understanding of time, but after all the same was true for the Big Bang model, the theory of relativity and quantum mechanics. \begin{rmk} As a matter of fact, there is no need for the equivalent mass of the repeller to be comparable to that of the attractor in absolute value to explain what happens. If we want to rule out the idea of real negative mass and stick with that of the local density defect, we may imagine for instance a mass $M^*\sim 10^{-3} M$, corresponding to a diameter of about $10^8 ly$ .  In this case the collapse time {\bf starting from now} will be almost identical to the result of \eqref{Tatt}. \end{rmk} 

\section{Appendix} In this section, we indicate the approximate values of the quantities used throughout the paper. 
\subsection {Classical values } 
- Light velocity in m/s: $ c = 299,792,458 m/s \sim 3. 10^5 m/s.$ \newline
- The mass of the sun $ M_S\sim 1.9891 × 10^{30}  kg \sim 2. 10^{30} kg. $  \newline
- The light year in meters:  $ 1 ly \sim 9,461. 10^{15} m. $  \newline
- The gravitational constant $ G\sim 6.7. 10^{-11}  m^3 s^{-2} kg^{-1}$

\subsection {The Shapley attractor and the local group}  

- Mass of the Shapley attractor  $ M\sim 8.10^{16} M_S. \sim 1,6. 10^{47} kg,$\newline
hence $$ K = 2GM \sim 2. 10^{37} m^3 s^{-2}.$$
- The distance of the local group to the Shapley attractor: $ W = 650 Mly \sim 6.15. 10^{24} m. $ \newline
- The distance of the attractor and repeller centers is  $ L \sim 1,4. 10^{25} m. $ \newline
- The velocity $$ |w'(0)| = 6,3. 10^5 m/s \sim 2,1. 10^{-3} c. $$


\begin{thebibliography}{99}


	 \bibitem{Hoff} \textsc {Y. Hoffman, D. Pomarède, R. Brent Tully and H. M. Courtois}, The Dipole Repeller, \emph{Nature Astronomy} 1, 30 (2017).
	 
	 \bibitem{Petit} \textsc {J.-P. Petit and G. d'Agostini},  Cosmological bimetric model with interacting positive and negative masses and two different speeds of light, in agreement with the observed acceleration of the Universe, \emph{Modern Physics Letters} A 29, 34  (2014), 1450182. 
	 
	 \bibitem{Proust}\textsc {D. Proust, H.Quintana, E.R. Carrasco, A. Reisenegger, E. Slezak, H. Muriel, R.Dünner, L. Sodré, Jr., M.J.Drinkwater, Q.A. Parker, , C.J.  Ragone},  The Shapley Supercluster: the Largest Matter Concentration in the Local Universe, \emph{The Messenger}  124 (2006), page 30.
	 	 
	 
	 	 
		\end{thebibliography}
\end{document}